\newtheorem{thm}{Theorem}[section]
\newtheorem{prop}[thm]{Proposition}
\newtheorem{lem}[thm]{Lemma}
\theoremstyle{definition}
\theoremstyle{remark}
\theoremstyle{remark}
\newcommand{\Z}{\mathbb{Z}}
\newcommand{\R}{\mathbb{R}}
\begin{document}

\title{Some examples of non-tidy spaces}

\author{Takahiro Matsushita}
\address{Graduate School of Mathematical Sciences, The University of Tokyo, 3-8-1 Komaba, Meguro-ku, Tokyo, 153-8914 Japan}
\email{tmatsu@ms.u-tokyo.ac.jp}

\begin{abstract}
We construct a free $\Z_2$-manifold $X_n$ for a positive integer $n$ such that $w_1(X_n)^n \neq 0$, but there is no $\Z_2$-equivariant map from $S^2$ to $X_n$.
\end{abstract}

\maketitle

\section{Introduction and Main theorems}

First we fix terminologies and notations we use in this paper. The group acts on spaces from the right unless otherwise stated. Let $\Gamma$ denote a group. In this paper, the $\Gamma$-action on a space $X$ is said to be free if for any $x \in X$, there is a neighborhood $U$ of $x$ such that $U \gamma \cap U = \emptyset$ for any $\gamma \in \Gamma \setminus \{ e_\Gamma\}$. For a $\Gamma$-space $X$, we denote the orbit space of $X$ by $\overline{X}$. Let $x_0 \in X$. The image of $\pi_1(X,x_0)$ via the group homomorphism $\pi_1(X,x_0) \rightarrow \pi_1(\overline{X},\overline{x}_0)$ induced by the quotient $X \rightarrow \overline{X}$ is often written by $\pi_1(X,x_0)$ also, for simplicity. The coefficient of the singular cohomology is considered as $\Z_2$, the cyclic group with order 2.

We write $S^n_a$ for the $n$-dimensional sphere with the antipodal action. For a free $\Z_2$-space $X$, we put
$${\rm coind}(X) = \sup \{ n \geq 0 \; | \; \textrm{There is a $\Z_2 $-map from $S^n_a$ to $X$.}\},$$
$${\rm ind}(X) = \inf \{ n \geq 0 \; | \; \textrm{There is a $\Z_2 $-map from $X$ to $S^n_a$.}\}.$$
$$h(X) =  \sup \{ n \geq 0 \; | \; w_1 (X)^n \neq 0 \}$$
and call the coindex, the index, and the Stiefel-Whitney height of $X$ respectively\footnote{These terminologies are due to \cite{Mat}. Many different terminologies are used, see \cite{Sto} or \cite{Tan}.}, where $w_1(X) \in H^1(\overline{X})$ is the 1st Stiefel-Whitney class of the double cover $X \rightarrow \overline{X}$.  It is obvious that
$${\rm coind}(X) \leq h(X) \leq {\rm ind}(X)$$
for every free $\Z_2$-space $X$. In \cite{Mat}, $X$ is said to be {\it tidy} if ${\rm ind}(X) = {\rm coind}(X)$.

In this paper, we prove the following.

\begin{thm}
For a positive integer $n$, there is a free $\Z_2$-space $X_n$ such that $h(X) =n$ but ${\rm coind}(X_n) = 1$.
\end{thm}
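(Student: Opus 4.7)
The plan is to take $X_n$ to be the connected double cover of a space $\overline{X_n}$ arranged so that (i) $\pi_1(\overline{X_n})$ is torsion-free, (ii) the classifying class $w\in H^1(\overline{X_n};\Z_2)$ of the cover is nonzero, and (iii) $w^n\ne 0$ in $H^n(\overline{X_n};\Z_2)$, while the dimension of $\overline{X_n}$ is at most $n$ (so that $w^{n+1}=0$). Since $X_n$ is then a connected free $\Z_2$-space with $w_1(X_n)=w$, conditions (ii) and (iii) together with $w^{n+1}=0$ will give $h(X_n)=n$.

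The coindex half of the statement is uniform and rests entirely on (i). By the standard correspondence for double covers, a $\Z_2$-equivariant map $S^2_a\to X_n$ is the same data as a continuous map $f\colon\R P^2\to\overline{X_n}$ with $f^*w=w_1(\R P^2)$. Assumption (i) forces the induced map $f_*\colon\pi_1(\R P^2)=\Z_2\to\pi_1(\overline{X_n})$ to be trivial, so $f$ lifts through the universal cover of $\overline{X_n}$; but any class pulled back from a universal cover is zero, so $f^*w=0$, a contradiction. Hence $\mathrm{coind}(X_n)\le 1$, while $w\ne 0$ furnishes an equivariant $S^1_a\to X_n$ and so $\mathrm{coind}(X_n)\ge 1$, giving equality.

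The remaining task is the construction of $\overline{X_n}$ satisfying (i)--(iii). For $n=1$ take $\overline{X_1}=S^1$ with its degree-two self-cover. For $n=2$, take the non-orientable closed surface $\overline{X_2}=N_3$ with three crosscaps; its one-relator $\pi_1=\langle a_1,a_2,a_3\mid a_1^2a_2^2a_3^2\rangle$ is torsion-free, and the standard formula $w_1(N_h)^2=h\cdot[N_h]^*\bmod 2$ gives $w_1(N_3)^2\ne 0$. For $n=3$, take $\overline{X_3}=N_3\times S^1$ with $w=w_1(N_3)+\eta$ (where $\eta$ generates $H^1(S^1;\Z_2)$); the expansion in $H^3(\overline{X_3};\Z_2)$ collapses to $w^3=w_1(N_3)^2\cdot\eta\ne 0$.

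The hard part is the case $n\ge 4$. Naive products of copies of $N_3$ with tori break down because the multinomial coefficients in the K\"unneth expansion of $w^n$ are forced to be even by Lucas's theorem: once $n$ in binary has $1$'s at positions beyond those reachable by the exponent bounds on each tensor factor, no compatible partition exists. To overcome this, one must produce in every dimension a single aspherical ``atomic'' space with torsion-free $\pi_1$ and $w^n\ne 0$, for instance a closed aspherical non-orientable $n$-manifold with $w_1^n\ne 0$ obtained via Davis's reflection group construction or from a suitable arithmetic hyperbolic manifold. This dimension-by-dimension construction is where I expect the technical heart of the argument to lie.
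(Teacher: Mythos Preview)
Your coindex argument is essentially the paper's: a $\Z_2$-map $S^2_a\to X$ exists if and only if $\pi_1(\overline{X})$ contains an odd element of order two (the paper states this as Theorem~2.3), so torsion-freeness of $\pi_1(\overline{X_n})$ forces $\mathrm{coind}(X_n)\le 1$. Your phrasing via lifting through the universal cover is a correct variant.

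The genuine gap is the construction for $n\ge 4$. You explicitly leave this open, gesturing at the Davis construction or arithmetic hyperbolic manifolds without verifying that either actually produces, in every dimension $n$, a closed aspherical $n$-manifold with torsion-free $\pi_1$ and $w_1^n\ne 0$. For arithmetic hyperbolic manifolds this is not at all obvious; for the Davis construction one would still have to compute the top Stiefel--Whitney power. Your diagnosis of why products of copies of $N_3$ and circles fail (Lucas's theorem on the multinomial coefficients) is correct, but you have not supplied a replacement, and you say so yourself.

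The paper avoids this difficulty with a uniform recursive construction: set $X_1=S^1_a$ and $X_{k+1}=X_k\times_{\Z_2}S^1_b$, where $S^1_b$ carries commuting left (reflection) and right (antipodal) $\Z_2$-actions. Then $\overline{X_n}$ is a closed flat $n$-manifold, a quotient of $\R^n$ by a free isometric action, so $\pi_1(\overline{X_n})$ is torsion-free by the standard averaging argument (any finite-order affine isometry fixes the barycentre of an orbit). For the height, Schultz's inequality $h(X\times_{\Z_2}S^n_b)\ge h(X)+n$ gives $h(X_n)\ge n$ by induction, and equality follows from $\dim\overline{X_n}=n$. No K\"unneth/Lucas obstruction arises because the product is twisted and Schultz's result handles the Stiefel--Whitney power directly. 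This recursive twisted-circle construction is the missing idea in your proposal.
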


The space $X_n$ is defined as follows. Let $S^n_b$ denote the $\Z_2$-$\Z_2$-space where its base space is the $n$-dimensional sphere $S^n \subset \R^{n+1}$, and the left and the right $\Z_2$-actions are defined by
$$\tau(x_0,\cdots,x_n) = (-x_0,x_1, \cdots, x_n),$$
$$(x_0,\cdots, x_n) \tau = (-x_0,\cdots ,-x_n),$$
where $\tau$ is the generator of $\Z_2$. Then we define $X_1 = S^1_a$, and $X_{k+1} = X_k \times_{\Z_2} S^1_b$.

In \cite{Sch}, Schultz proved that $h(X \times _{\Z_2} S^n_b) \geq h(X) + n$ for any free $\Z_2$-space $X$. We prove the equality holds, although this is not necessary to prove Theorem 1.1.

\begin{thm}
$h(X \times_{\Z_2} S^n_b) = h(X) + n$ for any free $\Z_2$-space $X$.
\end{thm}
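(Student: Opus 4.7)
The plan is to realise $Y := X \times_{\Z_2} S^n_b$ as a sphere bundle over $\overline{X}$, and then read off $h(Y)$ from the projective bundle formula.

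First I would identify the geometric structure. The left $\Z_2$-action on $S^n_b$ presents it as the unit sphere of the representation $\R_{\mathrm{sgn}} \oplus \R^n$ (sign on the coordinate $x_0$, trivial on $x_1,\dots,x_n$), so the Borel construction yields
$$Y \;=\; S(\mathcal{L} \oplus \underline{\R^n})$$
as a sphere bundle over $\overline{X}$, where $\mathcal{L} := X \times_{\Z_2} \R_{\mathrm{sgn}}$ is the real line bundle classifying the double cover $X \to \overline{X}$ (so $w_1(\mathcal{L}) = w_1(X)$) and $\underline{\R^n}$ is the trivial rank-$n$ bundle. The residual right $\Z_2$-action on $Y$ is the fiberwise antipodal map, so
$$\overline{Y} \;=\; P(\mathcal{L} \oplus \underline{\R^n}),$$
and the double cover $Y \to \overline{Y}$ is the unit sphere bundle of the tautological line bundle $\mathcal{T} \to \overline{Y}$; hence $w_1(Y) = w_1(\mathcal{T}) =: \xi \in H^1(\overline{Y})$.

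Next I would feed this into the projective bundle formula. Since $w_i(\mathcal{L} \oplus \underline{\R^n}) = 0$ for $i \geq 2$, it reads
$$H^*(\overline{Y}) \;\cong\; H^*(\overline{X})[\xi]\,\big/\,\big(\xi^{n+1} + w_1(X)\,\xi^n\big),$$
a free $H^*(\overline{X})$-module on the basis $1, \xi, \ldots, \xi^n$. Iterating the defining relation gives $\xi^{n+k} = w_1(X)^k\,\xi^n$ for every $k \geq 0$; by freeness of the basis this is nonzero iff $w_1(X)^k \neq 0$, while $\xi^k$ is itself a basis element and hence nonzero for $0 \leq k \leq n$. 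Therefore $h(Y) = n + h(X)$.

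The main (modest) obstacle is the first step: bookkeeping the two distinct $\Z_2$-actions on $S^n_b$ and verifying that the residual right antipodal action on $Y$ matches the fiberwise antipodal action on $S(\mathcal{L} \oplus \underline{\R^n})$, so that the resulting double cover really is $S(\mathcal{T}) \to P(\mathcal{L} \oplus \underline{\R^n})$. Once the bundle picture is in place the computation is a formal application of Leray--Hirsch, and it produces both inequalities simultaneously; in particular it independently recovers Schultz's bound $h(Y) \geq h(X) + n$.
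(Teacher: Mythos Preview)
Your argument is correct and takes a genuinely different route from the paper. The paper proceeds inductively, showing $h(X \times_{\Z_2} S^{n+1}_b) \leq h(X \times_{\Z_2} S^n_b) + 1$ by decomposing $S^{n+1}_b$ into two $\Z_2$-$\Z_2$-invariant pieces $A$ (a band around the equatorial $S^n_b$) and $B$ (caps around the poles). Writing $X' = X\times_{\Z_2} S^{n+1}_b$, $A' = X\times_{\Z_2} A \simeq_{\Z_2} X\times_{\Z_2} S^n_b$, and $B' = X\times_{\Z_2} B$ (which has $w_1(B')=0$), one lifts $w_1(X')^k$ to $H^k(X',A')$ and $w_1(X')$ to $H^1(X',B')$ and observes that their cup product lands in $H^{k+1}(X',A'\cup B') = 0$; the lower bound is quoted from Schultz. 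By contrast, you identify $\overline{Y}$ globally as the projectivization $P(\mathcal{L}\oplus\underline{\R^n})$ and invoke the Leray--Hirsch/projective bundle formula to compute $H^*(\overline{Y})$ outright, obtaining both inequalities at once and an explicit description of all powers of $w_1(Y)$. The paper's method is more elementary (only relative cup products, no bundle machinery) and its cover-and-lift pattern would adapt to other decompositions; your method is more structural, self-contained, and in particular does not rely on Schultz's inequality.
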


As far as I know, there is no explicit example published whose difference between the Stiefel-Whitney height and the coindex is greater than 1. On the other hand, it is known that the difference between ${\rm ind}(X)$ and $h(X)$ can be arbitrarily large. Indeed, the odd dimensional real projective space $\R P^{2n-1}$ is such example by the result of Stolz \cite{Sto}, see also \cite{Tan}.

\section{Proofs}
First we prove Theorem 1.1. As is said in Section 1, Schultz proved that $h(X \times_{\Z_2} S^n_b) \geq h(X) + n$. So we have that $h(X_n) \geq n$. Since $\overline{X}_n$ is an $n$-dimensional manifold (or by Theorem 1.2), we have $h(X_n) = n$. So what we must show is that there is no $\Z_2$-equivariant map from $S^2_a$ to $X_n$. To prove this, we establish a criterion to show the non-existence of equivariant maps, using fundamental groups.

Let $\Gamma$ be a discrete group, and $X$ a path-connected free $\Gamma$-space. Let $x_0 \in X$ be a base point. Then by the covering space theory, we have an isomorphism
$$\Gamma \cong \pi_1(\overline{X},\overline{x}_0) / \pi_1(X,x_0).$$
Recall that this isomorphism is given as follows. For $\alpha \in \pi_1(\overline{X},\overline{x}_0)$, let $\varphi \in \alpha$ and let $\tilde{\varphi}$ denote the lift of $\varphi$ whose initial point is $x_0$. Then the terminal point of $\tilde{\varphi}$ is in the fiber over $\overline{x}_0$, so there is a unique $\Phi_X(\alpha) \in \Gamma$ such that $\varphi (1) = x_0 \Phi_X(\alpha)$. This $\Phi_X : \pi_1(\overline{X},\overline{x}_0) \rightarrow \Gamma$ is a group homomorphism, which is surjective since $X$ is path-connected, and its kernel is $\pi_1(X,x_0)$. Hence $\Phi_X$ induces the isomorphism
$$\overline{\Phi}_X : \pi_1(\overline{X},\overline{x}_0)/ \pi_1(X,x_0) \longrightarrow \Gamma.$$

Let $X$ and $Y$ be connected free $\Gamma$-spaces and $f: X \rightarrow Y$ a $\Gamma$-equivariant map. Let $x_0 \in X$ and put $y_0 = f(x_0)$. Since the diagram
$$\begin{CD}
\pi_1(X,x_0) @>{f_*}>> \pi_1(Y,y_0)\\
@V{q_{X*}}VV @VV{q_{Y*}}V\\
\pi_1(\overline{X},\overline{x}_0) @>{\overline{f}_*}>> \pi_1(\overline{Y},\overline{y}_0)
\end{CD}$$
is commutative, so we have a group homomorphism
$$\hat{f}_* :\pi_1(\overline{X}, \overline{x}_0)/\pi_1(X,x_0) \rightarrow \pi_1(\overline{Y},\overline{y}_0)/\pi_1(Y,y_0).$$
We write $\Psi(f):\Gamma \rightarrow \Gamma$ for the group homomorphism which commutes the following diagram.
$$\begin{CD}
\Gamma @>{\Psi(f)}>> \Gamma\\
@A{\overline{\Phi}_X}AA @AA{\overline{\Phi}_Y}A\\
\pi_1(\overline{X},\overline{x}_0)/\pi_1(X,x_0) @>{\hat{f}_*}>> \pi_1(\overline{Y},\overline{y}_0)/\pi_1(Y,y_0).
\end{CD}$$
Then we have the following.

\begin{prop}
The group homomorphism $\Psi(f): \Gamma \rightarrow \Gamma$ is the identity.
\end{prop}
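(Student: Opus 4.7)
The plan is to unwind the definition of the maps $\Phi_X$ and $\Phi_Y$ and use $\Gamma$-equivariance of $f$ to show directly that $\Phi_Y \circ \overline{f}_* = \Phi_X$ as maps from $\pi_1(\overline{X},\overline{x}_0)$ to $\Gamma$. Since $\overline{\Phi}_X$ is an isomorphism, this equality of \emph{lifted} maps forces $\Psi(f)$ to be the identity by a short diagram chase.

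To carry this out, I would fix $\gamma \in \Gamma$ and pick $\alpha \in \pi_1(\overline{X},\overline{x}_0)$ with $\Phi_X(\alpha) = \gamma$; such $\alpha$ exists because $\Phi_X$ is surjective. Choose a loop $\varphi \in \alpha$ and let $\tilde{\varphi}$ denote its lift to $X$ with initial point $x_0$, so that $\tilde{\varphi}(1) = x_0 \gamma$ by definition of $\Phi_X$. Now consider the path $f \circ \tilde{\varphi}$ in $Y$: its initial point is $y_0 = f(x_0)$, and by $\Gamma$-equivariance of $f$ its terminal point is $f(x_0 \gamma) = y_0 \gamma$.

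The key step is to recognize $f \circ \tilde{\varphi}$ as the lift of $\overline{f} \circ \varphi$ starting at $y_0$. This follows from the identity
$$q_Y \circ f \circ \tilde{\varphi} = \overline{f} \circ q_X \circ \tilde{\varphi} = \overline{f} \circ \varphi,$$
together with uniqueness of path lifts for the covering $q_Y : Y \rightarrow \overline{Y}$. Since $\overline{f} \circ \varphi$ represents $\overline{f}_*(\alpha)$, the definition of $\Phi_Y$ yields $\Phi_Y(\overline{f}_*(\alpha)) = \gamma = \Phi_X(\alpha)$.

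Passing to the quotients by $\pi_1(X,x_0)$ and $\pi_1(Y,y_0)$ (the commutative square in the statement guarantees this is well-defined), we obtain $\overline{\Phi}_Y \circ \hat{f}_* = \overline{\Phi}_X$. Combining this with the defining relation $\Psi(f) \circ \overline{\Phi}_X = \overline{\Phi}_Y \circ \hat{f}_*$ gives $\Psi(f) \circ \overline{\Phi}_X = \overline{\Phi}_X$, and since $\overline{\Phi}_X$ is an isomorphism, $\Psi(f) = \mathrm{id}_\Gamma$. There is no serious obstacle: the only subtlety is the compatibility of lifts under $f$, which is automatic from uniqueness of path lifting, and the fact that $f$ maps fibers equivariantly so that the terminal point of the lift is controlled by $\gamma$.
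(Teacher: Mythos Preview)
Your proof is correct and follows essentially the same approach as the paper's: lift a loop representing $\alpha$, apply $f$, use equivariance to identify the terminal point, and conclude $\overline{\Phi}_Y \circ \hat{f}_* = \overline{\Phi}_X$. You are simply more explicit than the paper about why $f\circ\tilde{\varphi}$ is the relevant lift and about the final diagram chase, but the argument is the same.
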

\begin{proof}
Let $\alpha \in \pi_1(\overline{X},\overline{x}_0) / \pi_1(X,x_0)$ and let $\varphi$ be the loop of $(\overline{X},\overline{x}_0)$ which represents $\alpha$. Let $\tilde{\varphi}$ denote the lift of $\varphi$ whose initial point is $x_0$. Then $\tilde{\varphi}(1) = x_0 \overline{\Phi}_X(\alpha)$. Then we have
$y_0 \overline{\Phi}_X (\alpha) = f \circ \tilde{\varphi}(1) = y_0 \overline{\Phi}_Y(\hat{f}_* \alpha)$. Hence $\overline{\Phi}_X = \overline{\Phi}_Y \circ \hat{f}_*$.
\end{proof}

The situation we used here is the case $\Gamma = \Z_2$. Let $X$ be a path-connected free $\Z_2$-space and $x_0 \in X$, we say $\alpha \in \pi_1(\overline{X},\overline{x}_0)$ is said to be even if $\alpha \in \pi_1(X,x_0)$, and is said to be odd if $\alpha$ is not even. Then Proposition 2.1 asserts that for a $\Z_2$-equivariant map $f:X \rightarrow Y$, the group homomorphism $\overline{f}_* :\pi_1(\overline{X},\overline{x}_0) \rightarrow \pi_1(\overline{Y},\overline{y}_0)$ preserves the parity of $\pi_1(\overline{X},\overline{x}_0)$.

Let us start to the proof of Theorem 1.1.

\begin{lem}
The group $\pi_1(\overline{X}_n)$ has no non-trivial torsion elements.
\end{lem}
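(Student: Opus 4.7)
The plan is to prove, by induction on $n$, the stronger statement that $\overline{X}_n$ is aspherical and that $\pi_1(\overline{X}_n)$ is torsion-free. The base case $n=1$ is immediate, since $\overline{X}_1 = \R P^1 \cong S^1$ is a $K(\Z,1)$.

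The main structural input for the inductive step is that $\overline{X}_n$ admits a fiber bundle description
\begin{equation*}
S^1 \longrightarrow \overline{X}_n \longrightarrow \overline{X}_{n-1}.
\end{equation*}
Indeed, $X_n = X_{n-1} \times_{\Z_2} S^1_b$ is visibly an $S^1$-bundle over $\overline{X}_{n-1}$ whose fiber is the underlying space of $S^1_b$. The right $\Z_2$-action on $X_n$ acts as the antipodal map along each such fiber, and it commutes with the left $\Z_2$-action used in forming the balanced product; a direct check from the defining formulas gives $\tau \cdot ((x_0,x_1)\cdot \tau) = (x_0,-x_1) = (\tau \cdot (x_0,x_1))\cdot \tau$. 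Quotienting each fiber by this antipodal action therefore yields a bundle over $\overline{X}_{n-1}$ whose fiber is $S^1_b/\Z_2 = \R P^1 \cong S^1$ and whose total space is exactly $\overline{X}_n$.

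Assume inductively that $\overline{X}_{n-1}$ is aspherical with torsion-free fundamental group. Since $\pi_k(S^1)=0$ for $k \geq 2$ and $\pi_k(\overline{X}_{n-1})=0$ for $k \geq 2$ by hypothesis, the long exact sequence of the fibration above gives $\pi_k(\overline{X}_n)=0$ for all $k \geq 2$, so $\overline{X}_n$ is also aspherical. Its low-degree portion, where injectivity on the left uses $\pi_2(\overline{X}_{n-1})=0$, reads
\begin{equation*}
1 \longrightarrow \Z \longrightarrow \pi_1(\overline{X}_n) \longrightarrow \pi_1(\overline{X}_{n-1}) \longrightarrow 1.
\end{equation*}
An extension of a torsion-free group by a torsion-free group is torsion-free: a torsion element of $\pi_1(\overline{X}_n)$ would project to a torsion element of the torsion-free group $\pi_1(\overline{X}_{n-1})$, hence to the identity, so would lie in the normal copy of $\Z$, which itself has no torsion. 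This completes the induction, and the lemma follows.

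The only potentially delicate step is the verification of the $S^1$-bundle description $\overline{X}_n \to \overline{X}_{n-1}$, which really just amounts to the commutativity of the two $\Z_2$-actions on $S^1_b$ noted above; once this is in place, the long exact sequence argument is entirely routine.
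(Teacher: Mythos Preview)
Your proof is correct, but it follows a genuinely different route from the paper's. The paper observes that, by construction, $\overline{X}_n$ is the quotient of $\R^n$ by a free isometric action of $\pi_1(\overline{X}_n)$; it then applies the classical barycenter argument: a torsion element $\alpha$ of order $k$ would fix the point $\frac{1}{k}\sum_{i=1}^k x\alpha^i$ (isometries of $\R^n$ are affine), contradicting freeness. Your argument instead exploits the iterated $S^1$-bundle structure directly, running induction via the long exact sequence of the fibration $S^1 \to \overline{X}_n \to \overline{X}_{n-1}$ and the elementary fact that extensions of torsion-free groups by torsion-free groups are torsion-free. Your approach is more self-contained in that it never needs to identify the universal cover or invoke any metric structure, and it yields asphericality of $\overline{X}_n$ as a bonus; the paper's approach is shorter once one grants that $\overline{X}_n$ is a flat manifold, and it connects the lemma to the standard geometric fact that Bieberbach groups are torsion-free.
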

\begin{proof}
By the definition of $X_n$, $\overline{X}_n$  is the orbit space of a free and isometrical $\pi_1(\overline{X}_n)$-action on $\R^n$. Remark that for an affine map $A: \R^n \rightarrow \R^n$, $a_1,\cdots, a_m \in \R$ with $\sum_{i=1}^m a_i = 1$, and $x_1,\cdots, x_m \in \R^n$, we have
$$A (\sum_{i = 1}^m a_i x_i) = \sum_{i=1}^m a_i (Ax_i).$$
Let $\alpha \in \pi_1(\overline{X}_n)$ be a non-trivial torsion element and its order is denoted by $k$. Let $x \in \R^n$. Then the point
$$y = \frac{1}{k}\sum_{i=1}^k x \alpha^i \in \R^n$$
is fixed by $\alpha$. This is contradiction.
\end{proof}

Hence to prove Theorem 1.1, it is sufficient to prove the following.

\begin{thm}
Let $X$ be a path-connected free $\Z_2$-space. Then there is a $\Z_2$-equivariant map from $S^2_a$ to $X$ if and only if there is $\alpha \in \pi_1(\overline{X})$ such that $\alpha^2 = 1$ and $\alpha \not\in \pi_1(X)$.
\end{thm}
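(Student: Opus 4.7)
The plan is to translate the question into one about maps $\R P^2 \to \overline X$, since a $\Z_2$-equivariant map out of $S^2_a$ is essentially the same data as a map out of $\R P^2$ together with a compatible choice of double cover over it.

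For the forward direction, given a $\Z_2$-map $f: S^2_a \to X$, I would pass to the quotient $\overline f: \R P^2 \to \overline X$ and let $\beta$ be the generator of $\pi_1(\R P^2) \cong \Z_2$. Since $S^2_a$ is simply connected, $\beta$ is odd. By Proposition 2.1, $\overline f_*$ preserves parity, so $\alpha := \overline f_*(\beta)$ is odd, meaning $\alpha \notin \pi_1(X)$. Moreover, $\alpha^2 = \overline f_*(\beta^2) = 1$ because $\beta$ has order $2$.

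For the converse, suppose $\alpha \in \pi_1(\overline X)$ satisfies $\alpha^2 = 1$ and $\alpha \notin \pi_1(X)$. I would proceed in two steps. First, using the CW structure $\R P^2 = S^1 \cup_{z \mapsto z^2} D^2$, I represent $\alpha$ by a loop $\gamma: S^1 \to \overline X$ on the $1$-skeleton; since $\alpha^2 = 1$, the composite of $\gamma$ with the degree $2$ attaching map is null-homotopic, so $\gamma$ extends over the $2$-cell to a map $g: \R P^2 \to \overline X$ with $g_*(\beta) = \alpha$. Second, I pull back the double cover $X \to \overline X$ along $g$ to obtain a double cover $g^*X \to \R P^2$, classified by $g^*(w_1(X)) \in H^1(\R P^2; \Z_2) \cong \Z_2$. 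Evaluating on $\beta$ yields $g^*(w_1(X))(\beta) = w_1(X)(\alpha) \neq 0$, precisely because $\alpha \notin \pi_1(X)$. Hence the pullback is the non-trivial double cover, isomorphic as a $\Z_2$-space to $S^2_a \to \R P^2$, and the canonical $\Z_2$-equivariant map $g^*X \to X$ furnishes the desired $\Z_2$-map $S^2_a \to X$.

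The main obstacle is the lifting step in the converse: merely constructing $g: \R P^2 \to \overline X$ with $g_*(\beta) = \alpha$ is insufficient, since what one actually wants is a $\Z_2$-equivariant lift to $X$, not an ordinary lift $\R P^2 \to X$ (which cannot exist, precisely because $\alpha \notin \pi_1(X)$). Passing to the pullback double cover converts this into the cohomological condition $g^*(w_1(X)) \neq 0$, which the hypothesis supplies for free.
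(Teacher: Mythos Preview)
Your proof is correct. The forward direction is essentially identical to the paper's. For the converse, however, you take a genuinely different route from the paper. The paper works \emph{upstairs} in $X$ from the start: it lifts a loop $\varphi$ representing $\alpha$ to a path $\tilde\varphi$ in $X$, concatenates $\tilde\varphi$ with its $\tau$-translate to obtain an explicit $\Z_2$-equivariant map $\psi:S^1_a\to X$, and then uses the hypothesis $\alpha^2=1$ together with injectivity of $q_{X*}$ to see that $\psi$ is null-homotopic in $X$, so it extends equivariantly over the two hemispheres of $S^2_a$. You instead work \emph{downstairs}: first build $g:\R P^2\to\overline X$ with $g_*(\beta)=\alpha$, then recover the equivariant map by pulling back the double cover and identifying $g^*X$ with $S^2_a$ via the Stiefel-Whitney class computation $g^*w_1(X)\neq 0$.

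Both arguments are short and valid; the trade-off is that the paper's approach is more elementary (no characteristic classes, no pullback-bundle formalism, just path lifting), while yours is more functorial and makes transparent why the condition $\alpha\notin\pi_1(X)$ is exactly what forces the pulled-back cover to be nontrivial. Your pullback viewpoint also generalises more readily, e.g.\ to other structure groups or higher classifying-space skeleta.
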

\begin{proof}
Suppose there is a $\Z_2$-map $f: S^2_a \rightarrow X$. Let $\beta$ denote the generator of $\pi_1(\overline{S}^2_a) \cong \Z_2$. Since $\beta$ is odd, $\overline{f}_*(\beta)$ is odd. Since $\overline{f}_*(\beta) \cdot \overline{f}_*(\beta) = \overline{f}_*(\beta \cdot \beta) = 1$, we have completed the ``only if'' part.

On the other hand, suppose $\alpha \in \pi_1(\overline{X},\overline{x}_0)$ which is odd and $\alpha^2 = 1$. Let $\varphi \in \alpha$, and let $\tilde{\varphi}$ denote the lift of $\varphi$ whose initial point is $x_0$. Then $\psi = (\tilde{\varphi} \tau)\cdot \tilde{\varphi}$ is a loop of $(X,x_0)$, and is null-homotopic since $q_{X*}[\psi] = \alpha^2 = 1$ and $q_{X*}$ is injective, where $q_X:X \rightarrow \overline{X}$ is the quotient map. We can regard $\psi$ is a $\Z_2$-map from $S^1_a$ to $X$, and hence we can extend it to a $\Z_2$-map $S^2_a \rightarrow X$. This completes the proof.
\end{proof}

Finally, we prove Theorem 1.2.

\vspace{2mm}
\noindent {\it Proof of Theorem 1.2.} It is sufficient to prove that $h(X \times_{\Z_2} S^{n+1}_b) = h(X \times_{\Z_2} S^n_b) +1$ for $n \geq 0$. The part ``$\geq$'' is proved by Schultz in \cite{Sch}, so we give the proof of the part ``$\leq$''. Put
$$A = \{ (x_0,\cdots, x_{n+1}) \in S^n_b \; | \; |x_{n+1}| \leq \frac{1}{2}\},$$
$$B = \{ (x_0,\cdots ,x_{n+1}) \in S^n_b \; | \; |x_{n+1}| \geq \frac{1}{2}\}.$$
These are $\Z_2$-$\Z_2$-closed subset of $S^{n+1}_b$. Put $X' = X \times_{\Z_2} S^{n+1}_b$, $A' = X \times_{\Z_2} A$, and $B' = X \times_{\Z_2} B$. Then the followings hold.
\begin{itemize}
\item[(1)] $A' \simeq_{\Z_2} X \times_{\Z_2} S^n_b$.
\item[(2)] $B' \simeq_{\Z_2} \overline{X} \sqcup \overline{X}$ with the involution exchanging each $\overline{X}$. Hence $w_1(B') = 0$.
\item[(3)] $X' = A' \cup B'$. 
\end{itemize}

Suppose $w_1(X \times_{\Z_2} S^n_b)^k = 0$. Then $w_1(A')^k = 0$. Then there is $\alpha \in H^k(X',A')$ which maps to $w_1(X')^k$ via $H^k(X',A') \rightarrow H^k(X')$. Similarly, there is $\beta \in H^1(X',B')$ which maps to $w_1(X')$ via $H^1(X',B') \rightarrow H^1(X')$. Then $\alpha \cup \beta \in H^{k+1}(X',A' \cup B') = 0$, and which maps to $w_1(X')^{k+1}$. Hence we have $w_1(X')^{k+1} =0$. This completes the proof. \qed

\vspace{2mm}
\noindent {\bf Acknowledgement.} This work was supported by the Program for Leading Graduate Schools, MEXT, Japan.


\begin{thebibliography}{99}
\bibitem{Mat} J. Matou$\check{\rm s}$ek, {\it Using the Borsuk-Ulam theorem}, Universitext, Springer-Verlag Berlin Heidelberg (2003), corrected 2nd printing 2008.

\bibitem{Sch} C. Schultz, {\it Graph colorings, spaces of edges and spaces of circuits,} Adv. Math, {\bf 221}(6):1733-1756 (2009)

\bibitem{Sto} S. Stolz, {\it The level of real projective spaces}, Comment. Math. Helv., {\bf 64}(4):661-674 (1989)

\bibitem{Tan} R. Tanaka, {\it On the index and co-index of sphere bundles}, Kyushu J. Math. ({\bf 57}) 371-382,(2003)
\end{thebibliography}
\end{document}